\documentclass[11pt]{article}

\usepackage[letterpaper,margin=1.25in]{geometry}

\usepackage{amsmath,amssymb,amsfonts,amsthm}
\usepackage{mathtools}
\usepackage{mathrsfs}
\usepackage{appendix}
\usepackage{cite}

\usepackage{enumitem}
\usepackage{xcolor}
\usepackage[
    colorlinks=true,
    linkcolor=blue,
    citecolor=blue,
    urlcolor=blue
]{hyperref}

\numberwithin{equation}{section}
\allowdisplaybreaks

\theoremstyle{plain}
\newtheorem{theorem}{Theorem}[section]

\newtheorem{lemma}[theorem]{Lemma}

\theoremstyle{definition}

\theoremstyle{remark}
\newtheorem{remark}{Remark}

\title{Geometric refinements of Liouville-type theorems for the stationary Navier--Stokes equations in $\mathbb{R}^3$}

\author{Juhyeong Lee}

\date{}

\begin{document}

\maketitle

\begin{abstract}
We prove Liouville-type theorems for smooth solutions $(u,p)$ of the stationary Navier--Stokes equations in $\mathbb R^3$ satisfying the finite Dirichlet energy condition and the uniform decay condition. Our results give geometric refinements of two recent Osgood-type criteria, namely the relative decay criterion and the weighted integrability criterion formulated in terms of the head pressure $Q=\frac{1}{2}|u|^2+p$. The proof exploits several properties of the head pressure and the scalar triple-product structure of $(u\times\omega)\cdot\nabla Q$. Combined with an Osgood-type representation of the total vorticity energy,
this geometric structure yields Liouville-type criteria involving only the tangential interaction of $u$ and $\omega$ over the superlevel sets of $|Q|$. In the relative decay setting, they lead to Osgood-scale smallness conditions that allow subcritical growth beyond the corresponding uniform bounds. In the weighted integrability setting, the previous conditions involving the full velocity and velocity gradient are refined to involve only the tangential components of $u$ and $\omega$ along the level surfaces of $Q$, together with an angular factor measuring their relative orientation.

\medskip \noindent\textbf{Keywords:} Navier--Stokes equations; Liouville-type theorems; Osgood condition

\medskip \noindent\textbf{2020 Mathematics Subject Classification:} 35B53; 35Q30; 76D05
\end{abstract}

\section{Introduction}

In this paper, we consider the stationary Navier--Stokes equations in $\mathbb{R}^3$: 
\begin{equation} \label{eq:1.1} 
\begin{cases}
-\Delta u + (u \cdot \nabla) u +\nabla p  = 0, \\
\nabla\cdot u = 0,
\end{cases}
\end{equation}
with the uniform decay condition
\begin{equation} \label{eq:1.2}
u(x)\to 0\quad \text{as} \quad |x|\to+\infty\ .
\end{equation}
Here $u = (u_1(x), u_2(x), u_3(x))$ denotes the velocity field of the fluid and $p = p(x)$ denotes the pressure of the fluid. Throughout the paper, we denote the head pressure and the vorticity by $Q:=\frac{1}{2}|u|^2+p$ and $\omega:=\nabla\times u$, respectively. 

The Liouville problem for \eqref{eq:1.1} asks whether any smooth solution satisfying the uniform decay condition \eqref{eq:1.2} and the finite Dirichlet energy condition
\begin{equation} \label{eq:1.3} 
\int_{\mathbb R^3} |\nabla u|^2\,dx< + \infty
\end{equation}
must be trivial. This problem is formulated in Galdi's book \cite[Remark X.9.4, p.~729]{10} as the uniqueness of the zero solution in the class of stationary $D$-solutions vanishing at infinity. It is also stated as a conjecture in Tsai's book \cite[Conjecture 2.5, p.~23]{19}. Galdi proved, by means of a Caccioppoli-type inequality, that such a solution in $\mathbb R^n$ is trivial under the additional assumption $u\in L^{\frac{3n}{n-1}}(\mathbb R^n)$. For higher dimensions $n \ge 4$, this condition is covered by the Sobolev embedding and the decay of $u$ at infinity. The two-dimensional case was solved by Gilbarg and Weinberger \cite{11} using the maximum principle for the vorticity equation. Thus the only remaining open case is $n=3$.

On the other hand, several important partial results have been obtained in different directions. Chae \cite{1} obtained a Liouville-type theorem under the condition $\Delta u\in L^{6/5}(\mathbb R^3)$, a condition with the same scaling as $\nabla u \in L^{2}(\mathbb R^3)$. Chae and Wolf \cite{2} logarithmically improved Galdi's $L^{9/2}$-condition. Seregin \cite{15} proved another result under the condition $u\in L^6(\mathbb R^3)\cap BMO^{-1}(\mathbb R^3)$, where $BMO^{-1}(\mathbb R^3)$ has the same scaling as $L^3(\mathbb R^3)$. For further results, we refer to \cite{3,4,5,6,7,8,9,12,13,14,16,17,18,20} and the references therein.

A useful approach to study this problem is based on the head pressure $Q$, which plays an important role in several works \cite{1,2,4,5,7,8}. Indeed, it satisfies the elliptic equation
\[
\Delta Q-u\cdot\nabla Q=|\omega|^2.
\]
By \cite[Theorem~2.1]{1}, the assumptions \eqref{eq:1.2} and \eqref{eq:1.3} imply that $Q$ has a finite limit at infinity. After subtracting a suitable constant from the pressure, one may assume that 
\[
Q(x) \to 0 \quad \text{as} \quad |x|\to+\infty\  .
\]
Hence the maximum principle gives $Q\le0$ in $\mathbb R^3$. Again, by the maximum principle, either $Q\equiv0$, in which case the solution is already trivial, or
\[
Q(x)<0 \quad \text{for all } x\in\mathbb R^3.
\]
Thus, for any nontrivial smooth solution, the head pressure never vanishes. By Sard's theorem, for a.e. $\lambda\in(0,M)$ with $M:=\sup_{x\in\mathbb R^3}|Q(x)|$, the superlevel set $\{|Q|>\lambda\}$ has smooth boundary $\{|Q|=\lambda\}$, and thus the divergence theorem may be applied.

Recently, Chae \cite{7} further developed the head-pressure approach through level-set methods and the coarea formula, obtaining Osgood-type criteria that recover and extend previous results \cite{2,4,5}. More precisely, a continuous nondecreasing function $\xi:[0,+\infty) \to [0,+\infty)$ with $\xi(0)=0$ and $\xi(s)>0$ for $s>0$ satisfies the \emph{Osgood condition} if
\[
\int_0^1 \frac{ds}{\xi(s)}=+\infty. 
\] 
For such a function $\xi$, Chae proved that a smooth solution of \eqref{eq:1.1} satisfying \eqref{eq:1.2} and \eqref{eq:1.3} is trivial provided that, for some $q\ge1$,
\begin{equation} \label{eq:1.4}
\frac{|u|^2}{\xi(|Q|)^{\frac{3-2q}{q}}} \in L^{\frac{q}{q-1}}(\mathbb R^3), \quad
\frac{|\nabla u|}{\xi(|Q|)^{\frac{3q-3}{2q}}} \in L^{2q}(\mathbb R^3).
\end{equation}
As a corollary, Liouville-type results hold under either of the following relative bounds:
\begin{equation} \label{eq:1.5}
\sup_{x\in\mathbb R^3}\frac{|u(x)|^2}{\xi(|Q(x)|)}<+\infty
\quad\text{or}\quad
\sup_{x\in\mathbb R^3}\frac{|p(x)|}{\xi(|Q(x)|)}<+\infty .
\end{equation}

Motivated by these results, we obtain geometric refinements of the previous Osgood-type criteria \eqref{eq:1.4} and \eqref{eq:1.5}. These refinements are obtained by keeping track of the geometric structure of the scalar triple-product
\[
(u\times\omega)\cdot\nabla Q.
\]
Since $\Delta u$ is divergence-free, the Lamb form of the equation \eqref{eq:1.1}
\[
\nabla Q = \Delta u + u\times\omega
\]
yields weighted estimates on the level-set domains of $|Q|$, where we test the equation against $\nabla Q/\xi(|Q|)$. Combined with the Osgood-type representation \eqref{eq:2.1} of $\int_{\mathbb R^3}|\omega|^2\,dx$, these estimates yield Liouville-type criteria formulated in terms of the tangential components of $u$ and $\omega$ along the level surfaces of $Q$.

Before stating our main results, we introduce the tangential decomposition along the level surfaces of $Q$ and the associated angular factor. Set
\[
n_Q:=\frac{\nabla Q}{|\nabla Q|},\quad
u_T:=u-(u\cdot n_Q)n_Q,\quad
\omega_T:=\omega-(\omega\cdot n_Q)n_Q
\quad\text{on } \{|\nabla Q|>0\},
\]
and
\[
n_Q = u_T = \omega_T = 0 \quad\text{on } \{|\nabla Q|=0\}.
\] We then define the tangential angular factor $\Theta$ by
\[ 
\Theta := \frac{(u_T\times\omega_T)\cdot n_Q} {|u_T|\,|\omega_T|}
\quad \text{on } \{|u_T|\,|\omega_T|\,|\nabla Q|>0\},
\quad \Theta=0 \quad \text{on } \{|u_T|\,|\omega_T|\,|\nabla Q|=0\}.
\] 
Then $-1\le\Theta\le1$, and 
\begin{equation} \label{eq:1.6}
(u\times\omega)\cdot\nabla Q = |u_T|\,|\omega_T|\,|\nabla Q|\,\Theta 
\quad\text{in }\mathbb R^3. 
\end{equation}

We first state Osgood-scale smallness criteria.

\begin{theorem} \label{thm:1}
Let $(u,p)$ be a smooth solution of \eqref{eq:1.1} satisfying \eqref{eq:1.2} and \eqref{eq:1.3}, and let $\xi(\cdot)$ satisfy the Osgood condition. Suppose that either
\begin{equation} \label{eq:1.7}
\liminf_{\lambda \to 0}
\left(\int_{\lambda}^{1} \frac{ds}{\xi(s)}\right)^{-1}
\left\| \frac{|u_T|^2\Theta^2}{\xi(|Q|)} \right\|_{L^\infty(\{|Q|>\lambda\})} < 1,
\end{equation}
or there exists a sufficiently small constant $c>0$ such that
\begin{equation} \label{eq:1.8} 
\liminf_{\lambda \to 0}
\left(\int_{\lambda}^{1} \frac{ds}{\xi(s)}\right)^{-1}
\left\| \frac{|\omega_T|^2\Theta^2}{\xi(|Q|)} \right\|_{L^{\frac{3}{2}}(\{|Q|>\lambda\})} < c.
\end{equation}
Then $u\equiv 0$.
\end{theorem}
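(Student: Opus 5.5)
The plan is to argue by contradiction. Assume $u\not\equiv0$, so that $Q<0$ everywhere and $Q\to0$ at infinity. Then for every $\lambda\in(0,M)$ the set $\{|Q|>\lambda\}$ is bounded, and for a.e. $\lambda$ its boundary is smooth. I will produce two estimates for the weighted quantity
\[
I(\lambda):=\int_{\{|Q|>\lambda\}}\frac{|\nabla Q|^2}{\xi(|Q|)}\,dx,
\]
an Osgood-type lower bound by $\|\omega\|_2^2$ and an upper bound coming from the triple-product structure \eqref{eq:1.6}. Comparing them under \eqref{eq:1.7} or \eqref{eq:1.8} forces $\omega\equiv0$. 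Then $u$ is harmonic (since $-\Delta u=\nabla\times\omega=0$) and decays at infinity, so $u\equiv0$ by Liouville's theorem.

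\emph{Lower bound.} Integrate $\Delta Q-u\cdot\nabla Q=|\omega|^2$ over $\{|Q|>s\}$. Since $Q<0$, the outward normal on $\{|Q|=s\}$ is $n_Q$. On this surface $Q\equiv -s$, and $\int_{\{|Q|=s\}}u\cdot n_Q\,dS=0$ because $u$ is divergence-free. This gives
\[
\int_{\{|Q|>s\}}|\omega|^2\,dx=\int_{\{|Q|=s\}}|\nabla Q|\,dS .
\]
Dividing by $\xi(s)$, integrating over $s\in(\lambda,M)$ and applying the coarea formula yields the representation \eqref{eq:2.1}:
\[
I(\lambda)=\int_\lambda^M\frac{1}{\xi(s)}\int_{\{|Q|>s\}}|\omega|^2\,dx\,ds .
\]
Fix $\varepsilon>0$ and choose $\delta\in(0,1)$ such that $\int_{\{|Q|>s\}}|\omega|^2\ge(1-\varepsilon)\|\omega\|_2^2$ for $s<\delta$. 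Then $I(\lambda)\ge(1-\varepsilon)\|\omega\|_2^2\int_\lambda^\delta ds/\xi(s)$. By the Osgood condition, $\int_\lambda^\delta ds/\xi\big/\int_\lambda^1 ds/\xi\to1$ as $\lambda\to0$.

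\emph{Upper bound.} Use the Lamb form $\nabla Q=\Delta u+u\times\omega$ and test it against $\nabla Q/\xi(|Q|)=\nabla F(Q)$ on $\{|Q|>\lambda\}$, where $F(t)=\int_{-\lambda}^t d\tau/\xi(|\tau|)$, so that $F(Q)=0$ on $\{|Q|=\lambda\}$. Because $\nabla\cdot\Delta u=0$, the divergence theorem makes the $\Delta u$ term vanish. (The region is bounded, $u$ is smooth, and $F(Q)$ is bounded there since $|Q|>\lambda$ keeps $\xi$ away from $0$.) By \eqref{eq:1.6} and Cauchy--Schwarz,
\[
I(\lambda)=\int_{\{|Q|>\lambda\}}\frac{|u_T||\omega_T||\nabla Q|\Theta}{\xi(|Q|)}\le\Big(\int_{\{|Q|>\lambda\}}\frac{|u_T|^2|\omega_T|^2\Theta^2}{\xi(|Q|)}\Big)^{1/2}I(\lambda)^{1/2},
\]
and hence $I(\lambda)\le\int_{\{|Q|>\lambda\}}|u_T|^2|\omega_T|^2\Theta^2/\xi(|Q|)$.

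\emph{Comparison in the two cases.} Under \eqref{eq:1.7}, bound the integrand by $\|\,|u_T|^2\Theta^2/\xi(|Q|)\|_{L^\infty(\{|Q|>\lambda\})}\,|\omega|^2$. Under \eqref{eq:1.8}, use H\"older with exponents $3/2$ and $3$ together with the Sobolev inequality $\|u\|_6\le C\|\nabla u\|_2=C\|\omega\|_2$. This is valid since $u$ decays and $\nabla u\in L^2$, and $\|\nabla u\|_2=\|\omega\|_2$ for divergence-free $u$. The upper bound becomes $C^2\|\,|\omega_T|^2\Theta^2/\xi(|Q|)\|_{L^{3/2}}\|\omega\|_2^2$. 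Divide both bounds by $\int_\lambda^1 ds/\xi$, take $\liminf_{\lambda\to0}$, and then let $\varepsilon\to0$. This gives $\|\omega\|_2^2\le\kappa\|\omega\|_2^2$ with $\kappa<1$; in the second case this holds once $c<C^{-2}$. Hence $\omega\equiv0$.

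The main obstacle is justifying the boundary computations. One has to restrict to regular values $\lambda$ (via Sard) to get the identity on $\{|Q|>\lambda\}$, and check the orientation of the normal and the sign of $Q$ in the flux identity. One must also make sure the vanishing of the $\Delta u$ term uses only $F(Q)=0$ on the boundary and $\nabla\cdot\Delta u=0$.
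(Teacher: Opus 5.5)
Your proof is correct and follows the paper's argument. You test the Lamb form against $\nabla Q/\xi(|Q|)$ on $\{|Q|>\lambda\}$, remove the $\Delta u$ term by the divergence theorem, and apply the triple-product identity with H\"older and Sobolev; your Cauchy--Schwarz-then-H\"older step is the paper's three-factor H\"older inequality reordered. You then compare with the Osgood representation of $\|\omega\|_2^2$, whose lower-bound half you re-derive from the flux identity and the coarea formula rather than citing Chae's Lemma~1.

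The regular-value issue you flag is handled as in the paper: approximate any $\lambda$ by regular values $\lambda_k\downarrow\lambda$. The $L^\infty$ and $L^{3/2}$ norms over $\{|Q|>\lambda_k\}$ are bounded by those over $\{|Q|>\lambda\}$, and $\int_{\lambda_k}^1 ds/\xi(s)\to\int_\lambda^1 ds/\xi(s)$. So the $\liminf$ over regular values does not exceed the $\liminf$ over all $\lambda$ in the hypotheses.
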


\begin{remark}
Since $Q < 0$ in the nontrivial case, we have
\[
p=Q-\frac12|u|^2<0, \quad |p|=|Q|+\frac12|u|^2,
\]
and hence $|u_T|^2 \le |u|^2\le 2|p|$. Thus, the velocity condition \eqref{eq:1.7} may be replaced by the
following pressure condition:
\begin{equation} \label{eq:1.9}
\liminf_{\lambda \to 0}
\left(\int_{\lambda}^{1} \frac{ds}{\xi(s)}\right)^{-1}
\left\| \frac{|p|\Theta^2}{\xi(|Q|)} \right\|_{L^\infty(\{|Q|>\lambda\})} < \frac{1}{2}.
\end{equation}
\end{remark}

\begin{remark}
We now point out that Theorem~\ref{thm:1} gives a geometric refinement of the relative decay conditions \eqref{eq:1.5}. Since the Osgood condition implies $\int_\lambda^1 \frac{ds}{\xi(s)}\to+\infty$ as $\lambda \to 0$, if the velocity condition in \eqref{eq:1.5} holds, then we have
\[
\left(\int_\lambda^1 \frac{ds}{\xi(s)}\right)^{-1}
\left\| \frac{|u_T|^2\Theta^2}{\xi(|Q|)} \right\|_{L^\infty(\{|Q|>\lambda\})}
\le
\left(\int_\lambda^1 \frac{ds}{\xi(s)}\right)^{-1}
\left\| \frac{|u|^2}{\xi(|Q|)} \right\|_{L^\infty(\mathbb R^3)}
\to 0,
\]
as $\lambda\to0$. Hence our condition \eqref{eq:1.7} is automatically satisfied. Similarly, if the pressure condition in \eqref{eq:1.5} holds, then the corresponding condition \eqref{eq:1.9} follows. Therefore, each relative decay condition in \eqref{eq:1.5} is contained in the corresponding Osgood-scale smallness condition of Theorem~\ref{thm:1}. 

The gain in the admissible growth rate can be described at the Osgood scale. The velocity condition \eqref{eq:1.5} requires
\[
\left\|
\frac{|u|^2}{\xi(|Q|)}
\right\|_{L^\infty(\{|Q|>\lambda\})}
=O(1)
\quad\text{as}\quad \lambda\to0,
\]
whereas our condition \eqref{eq:1.7} is satisfied whenever there exists a sequence $\lambda_j\to0$ such that, for some $c<1$,
\[
\left\|
\frac{|u_T|^2\Theta^2}{\xi(|Q|)}
\right\|_{L^\infty(\{|Q|>\lambda_j\})}
\le
c\int_{\lambda_j}^1\frac{ds}{\xi(s)}
\quad\text{for all sufficiently large } j.
\]
Thus, our result extends the admissible behavior from uniform boundedness to subcritical growth at the Osgood scale. The pressure case admits the analogous subcritical growth at the same scale, with coefficient $c<\frac{1}{2}$.
\end{remark}

We next state weighted integrability criteria.

\begin{theorem} \label{thm:2}
Let $(u,p)$ be a smooth solution of \eqref{eq:1.1} satisfying \eqref{eq:1.2} and \eqref{eq:1.3}, and let $\xi(\cdot)$ satisfy the Osgood condition. Suppose that
\begin{equation} \label{eq:1.10}
\frac{|u_T| |\Theta|}{\xi(|Q|)^a}\in L^q(\mathbb R^3), 
\quad \frac{|\omega_T|}{\xi(|Q|)^b}\in L^r(\mathbb R^3), 
\end{equation}
where $\frac{1}{q}+\frac{1}{r}=\frac{1}{2}$ with $2\le q,r\le +\infty$, and $a+b=\frac{1}{2}$ with $a,b\in \mathbb R$. Then $u\equiv 0$.
\end{theorem}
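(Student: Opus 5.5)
The plan is to show that \eqref{eq:1.10} forces the weighted Dirichlet-type quantity $\int_{\mathbb R^3}|\nabla Q|^2/\xi(|Q|)\,dx$ to be finite. The Osgood condition, combined with the representation of the vorticity energy through the level sets of $Q$, then makes this impossible unless $\omega\equiv0$.

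\textbf{Step 1: the weighted identity on superlevel sets.} Assume $u\not\equiv0$, so that $Q<0$ everywhere, and fix a regular value $\lambda\in(0,M)$ of $|Q|$ (this holds for a.e.\ $\lambda$ by Sard's theorem). Let $\Omega_\lambda=\{|Q|>\lambda\}$. This set is bounded because $Q\to0$ at infinity, and $\xi(|Q|)\ge\xi(\lambda)>0$ on it. Define
\[
\Phi_\lambda(x)=\int_\lambda^{|Q(x)|}\frac{ds}{\xi(s)} .
\]
Then $\Phi_\lambda$ is smooth on $\overline{\Omega_\lambda}$, vanishes on $\partial\Omega_\lambda$, and satisfies $\nabla\Phi_\lambda=-\nabla Q/\xi(|Q|)$ because $|Q|=-Q$.

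I test the Lamb form $\nabla Q=\Delta u+u\times\omega$ against $\nabla Q/\xi(|Q|)$ on $\Omega_\lambda$. Since $\nabla\cdot\Delta u=0$, we have
\[
\int_{\Omega_\lambda}\Delta u\cdot\nabla\Phi_\lambda\,dx=\int_{\partial\Omega_\lambda}\Phi_\lambda\,\Delta u\cdot\nu\,dS=0 .
\]
Hence, using \eqref{eq:1.6},
\[
\int_{\Omega_\lambda}\frac{|\nabla Q|^2}{\xi(|Q|)}dx=\int_{\Omega_\lambda}\frac{|u_T||\omega_T||\nabla Q|\Theta}{\xi(|Q|)}dx .
\]
Applying Cauchy--Schwarz to the right-hand side and absorbing gives
\[
\int_{\Omega_\lambda}\frac{|\nabla Q|^2}{\xi(|Q|)}dx\le\int_{\Omega_\lambda}\frac{|u_T|^2\Theta^2}{\xi(|Q|)^{2a}}\cdot\frac{|\omega_T|^2}{\xi(|Q|)^{2b}}dx ,
\]
where the split of $\xi(|Q|)$ uses $2a+2b=1$. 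Hölder's inequality with $\frac1q+\frac1r=\frac12$ (exponents $q/2$ and $r/2$, including the endpoint cases $q$ or $r=\infty$) bounds the right-hand side by
\[
\Bigl\|\tfrac{|u_T||\Theta|}{\xi(|Q|)^a}\Bigr\|_{L^q}^2\,\Bigl\|\tfrac{|\omega_T|}{\xi(|Q|)^b}\Bigr\|_{L^r}^2=:K<\infty ,
\]
and this bound is uniform in $\lambda$. Letting $\lambda\to0$ along regular values and using monotone convergence gives $\int_{\mathbb R^3}|\nabla Q|^2/\xi(|Q|)\,dx\le K$.

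\textbf{Step 2: the Osgood representation of the vorticity energy.} I integrate $\nabla\cdot(\nabla Q-uQ)=\Delta Q-u\cdot\nabla Q=|\omega|^2$ over $\Omega_\lambda$. On $\partial\Omega_\lambda$ we have $Q=-\lambda$, so the convective flux reduces to $-\lambda\int_{\partial\Omega_\lambda}u\cdot\nu\,dS=0$ by incompressibility. The outward normal is $\nu=\nabla Q/|\nabla Q|$, since $Q$ is more negative inside $\Omega_\lambda$. This yields
\[
g(\lambda):=\int_{\Omega_\lambda}|\omega|^2dx=\int_{\{|Q|=\lambda\}}|\nabla Q|\,dS ,
\]
which is presumably the content of \eqref{eq:2.1}. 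The function $g$ is nonincreasing in $\lambda$.

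\textbf{Step 3: conclusion via the coarea formula.} By the coarea formula,
\[
\int_{\mathbb R^3}\frac{|\nabla Q|^2}{\xi(|Q|)}dx=\int_0^M\frac{g(s)}{\xi(s)}ds\ \ge\ g(s_0)\int_0^{s_0}\frac{ds}{\xi(s)}
\]
for any $s_0\in(0,M)$. The right-hand side is $+\infty$ by the Osgood condition unless $g(s_0)=0$. Combined with Step 1, this gives $g(s_0)=0$ for every $s_0$, so $\omega\equiv0$ on $\bigcup_{s_0}\Omega_{s_0}=\mathbb R^3$. Then $u$ is harmonic, since $\Delta u=-\nabla\times\omega=0$, and it vanishes at infinity, so $u\equiv0$ by Liouville's theorem.

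\textbf{Main obstacle.} The delicate point is Step 1. The choice of the primitive $\Phi_\lambda$ vanishing on the level set is what kills the viscous boundary term, and it requires care at points where $\nabla Q=0$ (handled by the conventions $n_Q=u_T=\omega_T=0$ there, so that \eqref{eq:1.6} still holds). The passage $\lambda\to0$ along regular values is routine once the bound is uniform in $\lambda$.
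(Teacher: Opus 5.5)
Your proof is correct. Its first half is exactly the paper's proof of Lemma~\ref{lem:2}: the primitive vanishing on $\{|Q|=\lambda\}$ kills the viscous term, and identity \eqref{eq:1.6} plus H\"older with absorption bounds $\int_{\Omega_\lambda}|\nabla Q|^2/\xi(|Q|)\,dx$ by the product of the squared norms, uniformly in $\lambda$.

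The difference is in the endgame. The paper invokes Lemma~\ref{lem:1}, which is Chae's normalized limit identity $\lim_{\lambda\to0}(\int_\lambda^1 ds/\xi(s))^{-1}\int_{\Omega_\lambda}|\nabla Q|^2/\xi(|Q|)\,dx=\int_{\mathbb R^3}|\omega|^2dx$. (That identity, not the flux identity, is what \eqref{eq:2.1} states.) The paper then divides the uniform bound by $\int_\lambda^1 ds/\xi(s)\to\infty$ and obtains $\int|\omega|^2=0$.

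You instead pass to the global bound $\int_{\mathbb R^3}|\nabla Q|^2/\xi(|Q|)\,dx\le K$. You contradict it directly using the flux identity $g(\lambda)=\int_{\{|Q|=\lambda\}}|\nabla Q|\,dS$, the coarea formula, and the monotonicity of $g$. This is precisely the mechanism behind Chae's identity, so your argument is self-contained. It also sidesteps the paper's extension of the estimate to non-regular $\lambda$.

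What the paper's packaging buys is the quantitative inequality \eqref{eq:2.2} with the sharp constant $\int|\omega|^2$. Theorem~\ref{thm:1} needs this, since it compares against $\alpha<1$ and $C_S^2\beta<1$. Your qualitative dichotomy (a finite weighted integral forces $\omega\equiv0$) suffices only for Theorem~\ref{thm:2}. It upgrades at once, though. From $\int_{\Omega_\lambda}|\nabla Q|^2/\xi(|Q|)\,dx\ge g(s_0)\int_\lambda^{s_0}ds/\xi(s)$, divide by $\int_\lambda^1ds/\xi(s)$, let $\lambda\to0$, then let $s_0\to0$. This gives the lower half of \eqref{eq:2.1}, which is all the paper actually uses.

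Two cosmetic points, both harmless. First, $\Phi_\lambda$ is only $C^1$ because $\xi$ is merely continuous, but that is enough for the divergence theorem. Second, the convective flux is $-\int_{\partial\Omega_\lambda}Q\,u\cdot\nu\,dS=+\lambda\int_{\partial\Omega_\lambda}u\cdot\nu\,dS$ rather than with a minus sign; it vanishes either way.
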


\begin{remark}
Theorem~\ref{thm:2} also provides a geometric refinement of the weighted integrability condition \eqref{eq:1.4}. For $s \ge 1$, the condition \eqref{eq:1.10} with the parameter choice
\[
(q,r,a,b) = \left( \frac{2s}{s-1},\,2s,\, \frac{3-2s}{2s},\, \frac{3s-3}{2s} \right)
\]
yields the assumption
\begin{equation} \label{eq:1.11} 
\frac{|u_T|^2\Theta^2}{\xi(|Q|)^{\frac{3-2s}{s}}} \in L^{\frac{s}{s-1}}(\mathbb R^3),
\quad
\frac{|\omega_T|}{\xi(|Q|)^{\frac{3s-3}{2s}}} \in L^{2s}(\mathbb R^3).
\end{equation}
Our condition \eqref{eq:1.11} improves the previous condition \eqref{eq:1.4} by involving only the tangential components along the level surfaces of $Q$. Since $0\le \Theta^2 \le 1$, $|u_T|\le |u|$, and $|\omega_T|\le|\omega|\le \sqrt{2}\,|\nabla u|$, our assumptions are weaker than the corresponding previous conditions and hence give a refinement of the weighted integrability criterion.
\end{remark}

\section{Proofs of the main theorems}

Throughout this section, we work in the nontrivial case and use the properties of the head pressure recalled in the Introduction. After normalizing the pressure, we may assume that $Q(x)\to0$ as $|x|\to+\infty$. Since 
\[
\Delta Q-u\cdot\nabla Q=|\omega|^2,
\]
the maximum principle implies that either $Q\equiv0$, in which case the solution is trivial, or $Q<0$ in $\mathbb R^3$. We henceforth consider the nontrivial case and set
\[
M:=\sup_{x\in\mathbb R^3}|Q(x)|.
\]
For \(0<\lambda<M\), define
\[
\Omega_\lambda
:=
\left\{ x\in\mathbb R^3: |Q(x)|>\lambda \right\}.
\]
Since $Q(x)\to0$ as $|x|\to+\infty$, the set $\Omega_\lambda$ is bounded. Moreover, by Sard's theorem, for a.e. $\lambda\in(0,M)$, $\lambda$ is a regular value of $|Q|$, and hence $\Omega_\lambda$ has the smooth boundary $\partial\Omega_\lambda = \left\{x\in\mathbb R^3: |Q(x)|=\lambda\right\}$. Therefore, the divergence theorem may be applied to $\Omega_\lambda$ for a.e. $\lambda\in(0,M)$.

We also let \(\xi:[0,+\infty)\to[0,+\infty)\) be a continuous nondecreasing
function such that
\[
\xi(0)=0,
\quad
\xi(s)>0 \quad \text{for }s>0,
\quad
\int_0^1\frac{ds}{\xi(s)}=+\infty.
\]

The following lemma gives the Osgood-type representation of the total energy of the vorticity established in \cite[Theorem~1.1]{7}.

\begin{lemma} \label{lem:1}
Let $(u,p)$ be a nontrivial smooth solution of \eqref{eq:1.1} satisfying \eqref{eq:1.2} and \eqref{eq:1.3}. Then the following identity holds:
\begin{equation} \label{eq:2.1}
\lim_{\lambda \to 0} 
\left\{ \left( \int_{\lambda}^{1}\frac{ds}{\xi(s)}\right)^{-1} 
\int_{\{|Q|>\lambda\}} \frac{|\nabla Q|^2}{\xi(|Q|)}\,dx \right\} 
= \int_{\mathbb R^3}|\omega|^2\,dx . 
\end{equation}
\end{lemma}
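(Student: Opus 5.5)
The plan is to reduce the identity \eqref{eq:2.1} to a statement about the flux of $\nabla Q$ through the level surfaces of $|Q|$, via the coarea formula, and then to apply a Ces\`aro-type averaging argument with respect to the measure $ds/\xi(s)$. Throughout, we are in the nontrivial case, so $Q<0$ and $|Q|=-Q$. We set
\[
g(s):=\int_{\{|Q|=s\}}|\nabla Q|\,dS, \qquad E:=\int_{\mathbb R^3}|\omega|^2\,dx<+\infty,
\]
where $E$ is finite by \eqref{eq:1.3}. Since $\Omega_\lambda$ is bounded and $\xi(|Q|)\ge\xi(\lambda)>0$ on $\Omega_\lambda$, the coarea formula applied to the smooth function $|Q|$ gives
\[
\int_{\Omega_\lambda}\frac{|\nabla Q|^2}{\xi(|Q|)}\,dx=\int_\lambda^M\frac{1}{\xi(s)}\int_{\{|Q|=s\}}|\nabla Q|\,dS\,ds=\int_\lambda^M\frac{g(s)}{\xi(s)}\,ds .
\]

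The key step is to show that $g(s)=\int_{\Omega_s}|\omega|^2\,dx$ for a.e.\ $s\in(0,M)$. Fix a regular value $s$, so that $\partial\Omega_s=\{|Q|=s\}$ is smooth by Sard's theorem and $\nabla Q\neq0$ there. Since $-Q$ decreases across $\partial\Omega_s$ in the outward direction, the outward unit normal is $\nu=\nabla Q/|\nabla Q|$. Integrating the equation $\Delta Q-u\cdot\nabla Q=|\omega|^2$ over $\Omega_s$ and using the divergence theorem gives
\[
\int_{\Omega_s}\Delta Q\,dx=\int_{\partial\Omega_s}\nabla Q\cdot\nu\,dS=g(s).
\]
Since $\nabla\cdot u=0$ and $Q\equiv-s$ on $\partial\Omega_s$, the divergence theorem also gives
\[
\int_{\Omega_s}u\cdot\nabla Q\,dx=\int_{\Omega_s}\nabla\cdot(Qu)\,dx=\int_{\partial\Omega_s}Q\,u\cdot\nu\,dS=-s\int_{\partial\Omega_s}u\cdot\nu\,dS=-s\int_{\Omega_s}\nabla\cdot u\,dx=0.
\]
Subtracting the two identities yields $g(s)=\int_{\Omega_s}|\omega|^2\,dx$. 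Consequently $g$ (after modification on a null set) is nonincreasing and bounded by $E$, and by monotone convergence $g(s)\to E$ as $s\to0^+$, since $\Omega_s\uparrow\mathbb R^3$.

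It remains to carry out the averaging. Write $\Phi(\lambda):=\int_\lambda^1\frac{ds}{\xi(s)}$, which tends to $+\infty$ as $\lambda\to0$ by the Osgood condition. If $M>1$, the contribution of the interval $[1,M]$ satisfies
\[
\int_1^M\frac{g(s)}{\xi(s)}\,ds\le\frac{E(M-1)}{\xi(1)},
\]
which is negligible after division by $\Phi(\lambda)$. If $M\le1$, we set $g=0$ on $(M,1)$, and this interval contributes only a bounded amount to $\int_\lambda^1\frac{E\,ds}{\xi(s)}$. Now fix $\varepsilon>0$ and choose $\delta\in(0,\min\{1,M\})$ such that $|g(s)-E|<\varepsilon$ on $(0,\delta)$. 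Then
\[
\left|\int_\lambda^1\frac{g(s)-E}{\xi(s)}\,ds\right|\le\varepsilon\,\Phi(\lambda)+C_\delta ,
\]
where $C_\delta$ is independent of $\lambda$ because $\xi\ge\xi(\delta)>0$ on $[\delta,1]$. Dividing by $\Phi(\lambda)$, letting $\lambda\to0$ and then $\varepsilon\to0$ gives \eqref{eq:2.1}.

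The main technical point is the flux identity of the second step. It requires the level sets to be regular, which holds for a.e.\ $s$ by Sard's theorem and is exactly what the coarea integral needs. It also requires the boundedness of $\Omega_s$, which follows from $Q\to0$ at infinity. Everything after that identity is elementary.
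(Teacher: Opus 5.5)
Your proof is correct. The paper gives no argument of its own for this lemma: it takes the identity from Chae~\cite{7} (Theorem~1.1 there) and uses it as a black box in Lemma~\ref{lem:2}.

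What you supply is a self-contained derivation along the level-set/coarea lines that the introduction attributes to \cite{7}. The coarea formula turns the weighted integral into $\int_\lambda^M g(s)\,\xi(s)^{-1}\,ds$. Integrating $\Delta Q-u\cdot\nabla Q=|\omega|^2$ over $\Omega_s$ for a regular value $s$ gives the flux identity $g(s)=\int_{\Omega_s}|\omega|^2\,dx$. The convective term drops out because $Q\equiv -s$ on $\partial\Omega_s$ and $\nabla\cdot u=0$. Monotone convergence followed by a Ces\`aro average against $ds/\xi(s)$ finishes the argument. Your outward normal $\nabla Q/|\nabla Q|$ on $\partial\Omega_s$ and your handling of the cases $M>1$ and $M\le 1$ are both right.

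Compared with the citation, your route makes three things explicit. The Osgood condition enters only through $\int_\lambda^1 ds/\xi(s)\to+\infty$. The full limit exists, not merely a $\liminf$. The only other inputs are $Q<0$ everywhere (so $\Omega_s\uparrow\mathbb R^3$) and $Q\to0$ at infinity (so each $\Omega_s$ is bounded). The paper recalls both of these in its introduction, from \cite{1} and the maximum principle.
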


We next combine Lemma~\ref{lem:1} with the tangential decomposition of the scalar triple-product to obtain the following estimate.

\begin{lemma} \label{lem:2}
Let $(u,p)$ be a nontrivial smooth solution of \eqref{eq:1.1} satisfying \eqref{eq:1.2} and \eqref{eq:1.3}. Then the following estimate holds:
\begin{equation} \label{eq:2.2}
\int_{\mathbb R^3}|\omega|^2\,dx 
\le \liminf_{\lambda \to 0} \left(\int_{\lambda}^{1} \frac{ds}{\xi(s)}\right)^{-1}
\left\| \frac{|u_T| |\Theta|}{\xi(|Q|)^a} \right\|_{L^q(\{|Q|>\lambda\})}^2 
\left\| \frac{|\omega_T|}{\xi(|Q|)^b} \right\|_{L^r(\{|Q|>\lambda\})}^2,
\end{equation}
where $\frac{1}{q}+\frac{1}{r}=\frac{1}{2}$ with $2\le q,r\le +\infty$, and $a+b=\frac{1}{2}$ with $a,b\in \mathbb R$.
\end{lemma}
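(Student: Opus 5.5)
We start from the identity of Lemma~\ref{lem:1} and estimate the quantity $\int_{\Omega_\lambda}|\nabla Q|^2/\xi(|Q|)\,dx$ for a.e.\ regular value $\lambda\in(0,M)$. Using the Lamb form $\nabla Q=\Delta u+u\times\omega$, we split
\[
\int_{\Omega_\lambda}\frac{|\nabla Q|^2}{\xi(|Q|)}\,dx=\int_{\Omega_\lambda}\frac{\Delta u\cdot\nabla Q}{\xi(|Q|)}\,dx+\int_{\Omega_\lambda}\frac{(u\times\omega)\cdot\nabla Q}{\xi(|Q|)}\,dx .
\]
The first term should vanish. Set $\Phi(t):=\int_\lambda^{t}\frac{ds}{\xi(s)}$ for $t\ge\lambda$. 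Then $\nabla Q/\xi(|Q|)=-\nabla\Phi(|Q|)$, since $|Q|=-Q$. Because $\Delta u$ is divergence-free, we get
\[
\int_{\Omega_\lambda}\Delta u\cdot\nabla\Phi(|Q|)\,dx=\int_{\partial\Omega_\lambda}\Phi(|Q|)\,\Delta u\cdot\nu\,dS=0,
\]
since $\Phi(|Q|)=\Phi(\lambda)=0$ on $\partial\Omega_\lambda=\{|Q|=\lambda\}$. The integrand is smooth, $\Omega_\lambda$ is bounded, and $\xi(|Q|)\ge\xi(\lambda)>0$ there, so the divergence theorem applies.

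For the second term we use \eqref{eq:1.6} and write $\xi^{-1}=\xi^{-a}\xi^{-b}\xi^{-1/2}$, which is valid since $a+b=\tfrac12$. Hölder's inequality with exponents $q,r,2$ (note $\tfrac1q+\tfrac1r+\tfrac12=1$) then gives
\[
\int_{\Omega_\lambda}\frac{|\nabla Q|^2}{\xi(|Q|)}\,dx\le \Big\|\tfrac{|u_T||\Theta|}{\xi(|Q|)^a}\Big\|_{L^q(\Omega_\lambda)}\Big\|\tfrac{|\omega_T|}{\xi(|Q|)^b}\Big\|_{L^r(\Omega_\lambda)}\Big(\int_{\Omega_\lambda}\frac{|\nabla Q|^2}{\xi(|Q|)}\,dx\Big)^{1/2}.
\]
Write $I_\lambda:=\int_{\Omega_\lambda}|\nabla Q|^2/\xi(|Q|)\,dx$. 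This quantity is finite because $\Omega_\lambda$ is bounded and $Q$ is smooth. Dividing by $I_\lambda^{1/2}$ (the case $I_\lambda=0$ is trivial) and squaring, we obtain $I_\lambda\le A_\lambda^2B_\lambda^2$, where $A_\lambda$ and $B_\lambda$ denote the two weighted norms.

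Finally, we multiply by $\big(\int_\lambda^1 ds/\xi(s)\big)^{-1}$, which is positive for small $\lambda$, and take $\liminf_{\lambda\to0}$. The left side converges to $\int_{\mathbb R^3}|\omega|^2\,dx$ by Lemma~\ref{lem:1}. This yields \eqref{eq:2.2}. The liminf may be taken along regular values, since a.e.\ $\lambda$ is regular and the norms are monotone in $\lambda$. If needed, one passes to the right-continuous version; this is harmless because both sides only improve as $\lambda$ decreases.

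The main obstacle is the vanishing of the $\Delta u$ term. It needs the primitive $\Phi$ normalized to vanish exactly on the level surface, together with smoothness of $\partial\Omega_\lambda$ at regular values. A secondary technical point is the treatment of the set $\{\nabla Q=0\}$, where \eqref{eq:1.6} holds trivially since both sides are zero.
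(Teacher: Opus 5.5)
Your proposal is correct and follows essentially the same route as the paper. The primitive $\Phi$, normalized to vanish on $\{|Q|=\lambda\}$, removes the $\Delta u$ term by the divergence theorem. Then \eqref{eq:1.6} and Hölder's inequality with the splitting $\xi^{-1}=\xi^{-a}\xi^{-b}\xi^{-1/2}$ give $I_\lambda\le A_\lambda^2B_\lambda^2$, and Lemma~\ref{lem:1} finishes the argument. The only difference is how you pass from regular to arbitrary $\lambda$: the paper approximates by regular $\lambda_k\downarrow\lambda$ with $\Omega_{\lambda_k}\uparrow\Omega_\lambda$, while your monotonicity remark also suffices, provided you note explicitly that for regular $\lambda'>\lambda$ one has $A_{\lambda'}B_{\lambda'}\le A_\lambda B_\lambda$ and $\int_{\lambda'}^1 ds/\xi(s)\to\int_\lambda^1 ds/\xi(s)$ as $\lambda'\downarrow\lambda$, which gives the bound for the liminf over all $\lambda$.
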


\begin{proof} [Proof of Lemma~\ref{lem:2}]
Fix a regular value $\lambda\in(0,\min\{1,M\})$. Testing the Lamb form of the equation $\nabla Q = \Delta u + u \times \omega$ by $\frac{\nabla Q}{\xi(|Q|)}$ over $\Omega_\lambda$, we have
\[
\int_{\Omega_\lambda}\frac{|\nabla Q|^2}{\xi(|Q|)}\,dx
=
\int_{\Omega_\lambda}
\frac{(u\times\omega)\cdot\nabla Q}{\xi(|Q|)}\,dx
+
\int_{\Omega_\lambda}
\frac{\Delta u\cdot\nabla Q}{\xi(|Q|)}\,dx
=: I_1+I_2 .
\]
Define the primitive
\[
F_\lambda(t):=\int_\lambda^t \frac{ds}{\xi(s)}, \quad t\ge \lambda .
\]
Since $F_\lambda(\lambda) = 0$ and $\nabla\cdot\Delta u=\Delta(\nabla\cdot u)=0$, the divergence theorem implies
\[
I_2 = -\int_{\Omega_\lambda} \Delta u\cdot \nabla F_\lambda(|Q|)\,dx 
=  -F_\lambda(\lambda)\int_{\partial\Omega_\lambda} \Delta u\cdot\nu\,dS + \int_{\Omega_\lambda} F_\lambda(|Q|)\,\nabla\cdot\Delta u\,dx =0.
\]
Therefore,
\[
\int_{\Omega_\lambda}\frac{|\nabla Q|^2}{\xi(|Q|)}\,dx
=
\int_{\Omega_\lambda}
\frac{(u\times\omega)\cdot\nabla Q}{\xi(|Q|)}\,dx.
\]
Using the identity \eqref{eq:1.6}, we obtain
\begin{equation} \label{eq:2.3}
\int_{\Omega_\lambda}\frac{|\nabla Q|^2}{\xi(|Q|)}\,dx
=
\int_{\Omega_\lambda}
\frac{|u_T|\,|\omega_T|\,|\nabla Q|\,\Theta}
{\xi(|Q|)}\,dx .
\end{equation}
Then Hölder's inequality gives
\[
\begin{aligned}
\int_{\Omega_\lambda}
\frac{|u_T|\,|\omega_T|\,|\nabla Q|\,\Theta}
{\xi(|Q|)}\,dx
&=
\int_{\Omega_\lambda}
\left(\frac{|u_T| \Theta}{\xi(|Q|)^a}\right)
\left(\frac{|\omega_T|}{\xi(|Q|)^b}\right)
\left(\frac{|\nabla Q|}{\xi(|Q|)^{1/2}}\right)\,dx  \\
&\le
\left\|
\frac{|u_T| |\Theta|}{\xi(|Q|)^a}
\right\|_{L^q(\Omega_\lambda)}
\left\|
\frac{|\omega_T|}{\xi(|Q|)^b}
\right\|_{L^r(\Omega_\lambda)}
\left(
\int_{\Omega_\lambda}
\frac{|\nabla Q|^2}{\xi(|Q|)}\,dx
\right)^{\frac{1}{2}},
\end{aligned}
\]
where $\frac{1}{q}+\frac{1}{r}=\frac{1}{2}$ with $2\le q,r\le +\infty$, and $a+b=\frac{1}{2}$ with $a,b\in \mathbb R$. 

Hence,
\[
\int_{\Omega_\lambda}
\frac{|\nabla Q|^2}{\xi(|Q|)}\,dx
\le
\left\|
\frac{|u_T| |\Theta|}{\xi(|Q|)^a}
\right\|_{L^q(\Omega_\lambda)}^2
\left\|
\frac{|\omega_T|}{\xi(|Q|)^b}
\right\|_{L^r(\Omega_\lambda)}^2 .
\]
Dividing by $\int_\lambda^1 \frac{ds}{\xi(s)}$, we have
\[
\left(\int_\lambda^1 \frac{ds}{\xi(s)}\right)^{-1}
\int_{\Omega_\lambda}\frac{|\nabla Q|^2}{\xi(|Q|)}\,dx
\le
\left(\int_\lambda^1 \frac{ds}{\xi(s)}\right)^{-1}
\left\|
\frac{|u_T| |\Theta|}{\xi(|Q|)^a}
\right\|_{L^q(\Omega_\lambda)}^2
\left\|
\frac{|\omega_T|}{\xi(|Q|)^b}
\right\|_{L^r(\Omega_\lambda)}^2.
\]
Although the preceding estimate was derived only for regular values of $\lambda$, it extends to every $\lambda\in(0,\min\{1,M\})$. Indeed, for any such $\lambda$, choose regular values $\lambda_k\downarrow\lambda$. Then
\[
\Omega_{\lambda_k}\uparrow\Omega_\lambda,
\]
and hence the corresponding integrals and $L^q$- and $L^r$-norms converge to those over $\Omega_\lambda$, with the usual interpretation when $q=+\infty$ or $r=+\infty$. Moreover,
\[
\int_{\lambda_k}^1\frac{ds}{\xi(s)}
\longrightarrow
\int_\lambda^1\frac{ds}{\xi(s)}.
\]
Thus the estimate holds for every $\lambda\in(0,\min\{1,M\})$. Taking the limit inferior as $\lambda \to 0$, Lemma~\ref{lem:1} yields
\[
\int_{\mathbb R^3}|\omega|^2\,dx
\le
\liminf_{\lambda \to 0}
\left(\int_\lambda^1 \frac{ds}{\xi(s)}\right)^{-1}
\left\|
\frac{|u_T| |\Theta|}{\xi(|Q|)^a}
\right\|_{L^q(\Omega_\lambda)}^2
\left\|
\frac{|\omega_T|}{\xi(|Q|)^b}
\right\|_{L^r(\Omega_\lambda)}^2.
\]
\end{proof}

\begin{proof} [Proof of Theorem~\ref{thm:1}]
We argue by contradiction. Suppose that $(u,p)$ is a nontrivial solution satisfying the assumptions of the theorem. Then, $\int_{\mathbb R^3}|\omega|^2\,dx>0$. 

\medskip
\noindent\textit{Case \eqref{eq:1.7}.} Assume that
\[
\alpha
:=
\liminf_{\lambda \to 0}
\left(\int_{\lambda}^{1}\frac{ds}{\xi(s)}\right)^{-1}
\left\|
\frac{|u_T|^2\Theta^2}{\xi(|Q|)}
\right\|_{L^\infty(\{|Q|>\lambda\})}
<1.
\]
Applying Lemma~\ref{lem:2} with $(q,r,a,b)=(+\infty,2,\frac{1}{2},0)$, we obtain
\[
\begin{aligned}
\int_{\mathbb R^3}|\omega|^2\,dx
&\le
\liminf_{\lambda \to 0}
\left(\int_{\lambda}^{1}\frac{ds}{\xi(s)}\right)^{-1}
\left\|
\frac{|u_T| |\Theta|}{\xi(|Q|)^{1/2}}
\right\|_{L^\infty(\{|Q|>\lambda\})}^{2}
\|\omega_T\|_{L^2(\{|Q|>\lambda\})}^{2} \\
&\le
\liminf_{\lambda \to 0}
\left(\int_{\lambda}^{1}\frac{ds}{\xi(s)}\right)^{-1}
\left\|
\frac{|u_T|^2\Theta^2}{\xi(|Q|)}
\right\|_{L^\infty(\{|Q|>\lambda\})}
\|\omega\|_{L^2(\mathbb R^3)}^{2} \\
&\le
\alpha
\int_{\mathbb R^3}|\omega|^2\,dx .
\end{aligned}
\]
Since $0\le \alpha<1$, this is a contradiction and thus $u$ must be trivial.

\medskip
\noindent\textit{Case \eqref{eq:1.8}.} 
Let $C_S>0$ be a Sobolev constant such that
\[
\|u\|_{L^6(\mathbb R^3)}
\le C_S\|\nabla u\|_{L^2(\mathbb R^3)}
= C_S\|\omega\|_{L^2(\mathbb R^3)}.
\]
Assume that for some constant $0<c\le C_S^{-2}$,
\[
\beta := \liminf_{\lambda \to 0}
\left(\int_{\lambda}^{1} \frac{ds}{\xi(s)}\right)^{-1}
\left\| \frac{|\omega_T|^2\Theta^2}{\xi(|Q|)} \right\|_{L^{\frac{3}{2}}(\{|Q|>\lambda\})} < c.
\]
Starting from \eqref{eq:2.3} and repeating the proof of Lemma~\ref{lem:2} with the angular factor $\Theta$ assigned to the vorticity term, we have
\[
\int_{\mathbb R^3}|\omega|^2\,dx 
\le \liminf_{\lambda \to 0} \left(\int_{\lambda}^{1} \frac{ds}{\xi(s)}\right)^{-1}
\left\| \frac{|u_T|}{\xi(|Q|)^a} \right\|_{L^q(\{|Q|>\lambda\})}^2 
\left\| \frac{|\omega_T||\Theta|}{\xi(|Q|)^b} \right\|_{L^r(\{|Q|>\lambda\})}^2.
\]
Choosing $(q,r,a,b)=(6,3,0,\frac{1}{2})$ and using the Sobolev embedding, we obtain
\[
\begin{aligned}
\int_{\mathbb R^3}|\omega|^2\,dx
&\le
\liminf_{\lambda \to 0}
\left(\int_\lambda^1\frac{ds}{\xi(s)}\right)^{-1}
\left\|
\frac{|\omega_T||\Theta|}{\xi(|Q|)^{1/2}}
\right\|_{L^3(\{|Q|>\lambda\})}^2
\|u_T\|_{L^6(\{|Q|>\lambda\})}^2
\\
&\le
\liminf_{\lambda \to 0}
\left(\int_\lambda^1\frac{ds}{\xi(s)}\right)^{-1}
\left\|
\frac{|\omega_T|^2\Theta^2}{\xi(|Q|)}
\right\|_{L^{3/2}(\{|Q|>\lambda\})}
\|u\|_{L^6(\mathbb R^3)}^2
\\
&\le
\liminf_{\lambda \to 0}
\left(\int_\lambda^1\frac{ds}{\xi(s)}\right)^{-1}
\left\|
\frac{|\omega_T|^2\Theta^2}{\xi(|Q|)}
\right\|_{L^{3/2}(\{|Q|>\lambda\})}
C_S^2\|\omega\|_{L^2(\mathbb R^3)}^2
\\
&\le
C_S^2\beta
\int_{\mathbb R^3}|\omega|^2\,dx.
\end{aligned}
\]
Since $C_S^2\beta <1$, this is a contradiction and thus $u \equiv 0$.

\end{proof}

\begin{proof} [Proof of Theorem~\ref{thm:2}]
We also argue by contradiction. Suppose that $(u,p)$ is a nontrivial solution satisfying the assumptions of the theorem. Then, $\int_{\mathbb R^3}|\omega|^2\,dx>0$. Assume that
\[
\frac{|u_T| |\Theta|}{\xi(|Q|)^a} \in L^q(\mathbb R^3),
\quad \frac{|\omega_T|}{\xi(|Q|)^b}\in L^r(\mathbb R^3),
\]
where $\frac{1}{q}+\frac{1}{r}=\frac{1}{2}$ with $2\le q,r\le +\infty$, and $a+b=\frac{1}{2}$ with $a,b\in \mathbb R$.

Since $\lim_{\lambda\to0}\left(\int_{\lambda}^{1} \frac{ds}{\xi(s)}\right)^{-1} = 0$, by Lemma~\ref{lem:2},
\[
\begin{aligned}
\int_{\mathbb R^3}|\omega|^2\,dx
&\le
\liminf_{\lambda \to 0} \left(\int_{\lambda}^{1} \frac{ds}{\xi(s)}\right)^{-1}
\left\| \frac{|u_T| |\Theta|}{\xi(|Q|)^a} \right\|_{L^q(\{|Q|>\lambda\})}^2 
\left\| \frac{|\omega_T|}{\xi(|Q|)^b} \right\|_{L^r(\{|Q|>\lambda\})}^2  \\
&\le
\liminf_{\lambda \to 0}
\left(\int_{\lambda}^{1} \frac{ds}{\xi(s)}\right)^{-1}
\left\| \frac{|u_T| |\Theta|}{\xi(|Q|)^a} \right\|_{L^q(\mathbb R^3)}^2
\left\| \frac{|\omega_T|}{\xi(|Q|)^b} \right\|_{L^r(\mathbb R^3)}^2 = 0.
\end{aligned}
\]
Therefore, this is a contradiction and thus $u \equiv 0$.
\end{proof}

\section*{Declarations}

\subsection*{Funding}

No funding was received for conducting this study.

\subsection*{Competing Interests}

The author has no relevant financial or non-financial interests to disclose.

% ============================================================
% Author information
% ============================================================

\bigskip

\medskip

\noindent\textbf{Juhyeong Lee.}
Department of Mathematics, University of British Columbia, Vancouver, BC V6T 1Z2, Canada;
email: \texttt{juhyeong.lee.math@gmail.com}

\end{document}